\theoremstyle{plain}
\newtheorem{thm}{Theorem}[section]
\newtheorem{lem}[thm]{Lemma}
\newtheorem{prop}[thm]{Proposition}
\newtheorem*{claim*}{Claim}
\theoremstyle{definition}
\theoremstyle{remark}
\newcommand{\R}{\mathbb{R}}
\newcommand{\RR}{\mathcal{R}}
\newcommand{\PP}{\mathcal{P}}
\newcommand{\I}{\mathcal{I}}
\newcommand{\Z}{\mathcal{Z}}
\newcommand{\indet}{\textnormal{indet}}
\newcommand{\dom}{\textnormal{dom}}
\title{Extension of $k$-regulous functions from varieties of arbitrary dimension}
\author{Juliusz Banecki}
\date{}
\begin{document}
\maketitle

\begin{abstract}
We prove that a $k$-regulous function defined on a non-singular affine variety can always be extended to the entire affine space.
\end{abstract}

\section{Introduction}
In the paper a real affine variety is a Zariski closed subset $X$ of $\R^n$ for some $n$. By $\PP(X)$ we denote the ring of polynomials on $X$. By $\PP(X)_{x_0}$ we denote the localisation of $\PP(X)$ at the maximal ideal of functions vanishing at $x_0$, which can be interpreted as the ring of germs of regular functions at $x_0$. The variety is said to be non-singular at $x_0$ if the corresponding local ring is regular. The definitions given here are compatible with the ones given in \cite{RAG,Mangolte}.

Given a non-singular affine variety $X\subset \R^n$ and a non-empty Euclidean open set $U\subset X$, we say that a function $f:U\rightarrow \R$ is $k$-regulous if it is of class $\mathcal{C}^k$ and there exist two polynomials $P,Q\in\PP(X)$, such that $Q$ does not vanish identically on any irreducible component of $X$ and $P=fQ$ on $U$. Clearly, the polynomials uniquely determine $f$, so in particular if $X$ is irreducible then $\RR^k(U)$ naturally embeds in the field of rational functions on $X$.

This definition is not completely standard, as for $X\neq \R^n$ some authors consider only these functions which admit $k$-regulous extensions to the entire space $\R^n$. From now on however, this distinction is no longer important, as the goal of the paper is to prove the following theorem:
\begin{thm}\label{main_thm}
Let $X\subset\R^n$ be a non-singular affine variety and let $f\in\RR^k(X)$. Then, there exists $F\in\RR^k(\R^n)$ such that $F\vert_X=f$.
\end{thm}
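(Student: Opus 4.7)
The proof I propose begins with two reductions. Since $X$ is non-singular, its irreducible components are pairwise disjoint (a regular local ring is a domain, so distinct components cannot meet at a smooth point), so we may assume $X$ is irreducible. Next, I would settle the easy case in which $f$ is in fact regular on $X$, i.e.\ the denominator $Q$ nowhere vanishes on $X$. Choose polynomial lifts $\tilde P,\tilde Q\in\R[x_1,\dots,x_n]$ of $P,Q$ and generators $h_1,\dots,h_s$ of the vanishing ideal of $X$, and set
\[
F \;:=\; \frac{\tilde P\,\tilde Q}{\tilde Q^{2}+h_1^{2}+\cdots+h_s^{2}}.
\]
The denominator can vanish only where $\tilde Q=h_1=\cdots=h_s=0$, i.e.\ at points of $X$ at which $Q$ vanishes, and no such points exist. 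Hence $F$ is a smooth rational function on $\R^n$ satisfying $F\vert_X=P/Q=f$.

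For the general case, the indeterminacy locus $Z:=\{x\in X:Q(x)=0\}$ is a non-empty proper Zariski-closed subset of $X$ across which $f$ is still $\mathcal{C}^k$. My plan would be to induct on $\dim Z$, with base case $Z=\emptyset$ being the regular case above. For the inductive step one would like to replace $f$ by a $k$-regulous function with strictly smaller indeterminacy; a natural tool is a sequence of blow-ups of $\R^n$ with smooth centres contained in $Z$, in the style of Hironaka-type resolution, producing a birational modification in which the strict transforms of $P$ and $Q$ have common zero set of strictly smaller dimension. One would then extend on the modified space by induction and descend the extension to $\R^n$, using that a suitably compatible rational extension upstairs pushes forward to a rational extension downstairs when the modification is well chosen.

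An alternative route avoiding explicit resolution is local-to-global. Near each $x_0\in X$ the non-singularity furnishes a Zariski-open neighbourhood $U\subset\R^n$ and a polynomial retraction $\pi\colon U\to X\cap U$; composing $f$ with $\pi$ yields a local $k$-regulous extension on $U$. One then patches these local extensions using rational ``partitions of unity'' built from combinations of $\tilde Q$ and the $h_i$ in the spirit of the regular-case formula, while ensuring that the patches agree to order $k$ on their overlaps.

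The \emph{principal obstacle} I foresee is ensuring that the extension is $\mathcal{C}^k$ on the whole of $\R^n$ and not merely along $X$. The hypothesis that $f$ is $k$-regulous only controls derivatives tangential to $X$, whereas the extended $F$ must be $\mathcal{C}^k$ in every direction; in the regular case this is automatic because $F$ is smooth, but once $Z$ is non-empty the denominator has non-trivial zeros and the interaction between its vanishing along $X$ and its transversal vanishing becomes subtle. The heart of the proof is presumably a careful choice of numerator and denominator---involving sufficiently high powers of $\tilde Q$ and of the $h_i$, or an equally delicate resolution---that makes the transversal vanishing of the denominator dominate along $Z$, so that the resulting formula is genuinely $\mathcal{C}^k$ across $Z$ inside $\R^n$.
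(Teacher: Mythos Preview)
Your proposal is a plan rather than a proof, and both routes you sketch have genuine gaps.

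Route (b), the local-to-global argument via polynomial retractions, fails at the first step: a non-singular affine variety $X\subset\R^n$ need \emph{not} admit a regular retraction from a Zariski-open neighbourhood in $\R^n$. The existence of such a retraction is essentially the condition that $X$ be (locally) retract rational, which is extremely restrictive; for instance, no smooth curve of positive genus has this property. So the sentence ``non-singularity furnishes \ldots\ a polynomial retraction $\pi\colon U\to X\cap U$'' is simply false in general, and with it the construction $f\circ\pi$ collapses. Your diagnosis of the main obstacle --- controlling $\mathcal C^k$ regularity in the directions transverse to $X$ --- is correct, but you never arrive at a mechanism for handling it; the suggestion of ``sufficiently high powers of $\tilde Q$ and of the $h_i$'' is exactly the sort of thing that does not work here, because no uniform exponent tames the transversal behaviour of an arbitrary $k$-regulous denominator.

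The paper's approach replaces the nonexistent retraction $\R^n\supset U\to X$ by a map going the other way: a polynomial $\sigma\colon X\times\R^{n-m}\to\R^n$ that is a local diffeomorphism near $(x_0,0)$ and restricts to the inclusion on $X\times\{0\}$. On $X\times\R^{n-m}$ the obvious projection to $X$ \emph{does} exist, so $f$ pulls back trivially to a $k$-regulous function there. The real work is pushing this forward through $\sigma$. For that one does \emph{not} invert $\sigma$; instead an algebraic surjectivity statement (Proposition~\ref{alg_prop}, applied with $I=(q)$) produces a regular germ $\psi\colon(\R^n,x_0)\to\R^n$ such that $\psi\circ\sigma$ agrees with the projection $X\times\R^{n-m}\to X$ modulo $(q)(t_1,\dots,t_{n-m})$. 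Lemma~\ref{denom_reduction_reg} then shows that $F:=(P/Q)\circ\psi$ is $k$-regulous on a constructibly open neighbourhood of $x_0$; this is precisely the step that handles the transversal $\mathcal C^k$ regularity you were worried about, and it does so without any choice of exponents. Globalisation is then carried out via the regulous weak Nullstellensatz rather than by partitions of unity.

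As for route (a), inducting on $\dim Z$ via blow-ups is essentially the strategy of the earlier two-dimensional paper, and the present paper's introduction explicitly notes that those ideas do not seem to suffice in higher dimension. Your sketch leaves the descent step (``pushes forward to a rational extension downstairs'') completely unspecified, and that is exactly where the difficulty lies.
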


Earlier, the result was known to hold in two particular cases:
\begin{enumerate}
    \item when $k=0$; due to \cite{Kollar-Nowak}. In the paper Koll\'ar and Nowak have also proven the converse, saying that for every $k$, the restriction of a $k$-regulous function on $\R^n$ to $X$ is $k$-regulous.
    \item when $\dim(X)=2$; due to \cite{two-dimensional}. The approach presented by the author there is rather technical and makes significant use of the resolution of singularities.
\end{enumerate}

The two above papers rely on vaguely related ideas, which do not seem sufficient to tackle the general case. The problem has remained open from the introduction of $k$-regulous functions in \cite{francuska} up until now. The approach given in the current paper is quite short as compared to \cite{two-dimensional}, and it is based on some innovative ideas from the recent paper \cite{rr=>urr} regarding retract rational varieties. Even though the results presented there seem to have little to do with $k$-regulous functions, it turns out that the approach given there is just what we need in the current problem.
\section{Proofs}
We need the following proposition playing an important role in \cite{rr=>urr}:
\begin{prop}[{\cite[Proposition 2.2]{rr=>urr}}]\label{alg_prop}
Let $X\subset \R^n$ be a non-singular irreducible affine variety of dimension $m$, and let $x_0\in X$. Let $I\subset \PP(X)$ be a non-zero ideal such that $I\subset \mathfrak{m}_{x_0}$. Then, there exists a polynomial mapping $\sigma:X\times \R^{n-m}\rightarrow \R^n$, such that
\begin{enumerate}
    \item $\sigma(x,0)=x$ for $x\in X$,
    \item the derivative of $\sigma$ at $(x_0,0)$ is a isomorphism,
    \item the induced homomorphism
    \begin{equation*}
        \PP(\R^n)_{x_0}\rightarrow (\PP(X)[t_1,\dots,t_{n-m}]/I(t_1,\dots,t_{n-m}))_{\mathfrak{m}_{(x_0,0)}}
    \end{equation*}
    is surjective.
\end{enumerate}
\end{prop}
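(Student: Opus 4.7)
Rather than building the ideal $I$ directly into the construction, I propose to produce $\sigma$ that is a Zariski-local isomorphism near $(x_0, 0)$; condition (3) will then follow automatically by composing with the quotient onto $(\PP(X)[t]/I(t))_{(x_0,0)}$. The idea is to take the naïve translation $\sigma_0(x, t) = x + \sum_j t_j e_{m+j}$ and multiply the correction by a single polynomial $f \in \PP(X)$ designed to kill the spurious algebraic preimages of $x_0$.

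\textbf{Setup.} After translating we may assume $x_0 = 0$, and after a linear change of coordinates on $\R^n$ that $T_0 X = \R^m \times \{0\}^{n-m}$. Let $\tilde Z = \{x \in X : x_1 = \cdots = x_m = 0\}$. Transversality of $X$ with $\{0\} \times \R^{n-m}$ at $0$ gives $T_0 \tilde Z = T_0 X \cap (\{0\} \times \R^{n-m}) = 0$, so $\{0\}$ is an isolated point of $\tilde Z$ and in particular an irreducible component. Writing $\tilde Z = \{0\} \sqcup Z$ with $Z$ Zariski closed in $X$ and $0 \notin Z$, and using the tautology $V(I(Z)) = Z$, we pick $f \in \PP(X)$ with $f\vert_Z \equiv 0$ and $f(0) = 1$.

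\textbf{The map.} Define
\[
\sigma(x, t) = x + \sum_{j=1}^{n-m} t_j\, f(x)\, e_{m+j}.
\]
This is a polynomial map $X \times \R^{n-m} \to \R^n$ with $\sigma(x, 0) = x$, verifying (1). The differential at $(0, 0)$ sends $(v, w) \in T_0 X \oplus \R^{n-m}$ to $v + f(0)(0, w) = v + (0, w)$, which is an isomorphism because the two summands are complementary in $\R^n$ by the choice of coordinates, verifying (2).

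\textbf{Condition (3).} I argue $\sigma^{-1}(0) = \{(0, 0)\}$ as a set. If $\sigma(x, t) = 0$, the first $m$ coordinate equations force $x \in \tilde Z$; if $x \in Z$ then $f(x) = 0$, so the remaining equations $x_{m+j} + t_j f(x) = 0$ force $x_{m+j} = 0$, contradicting $x \neq 0$; hence $x = 0$ and then $t_j f(0) = 0$ gives $t_j = 0$. Combined with étaleness at $(0, 0)$, the scheme-theoretic fibre over $0$ is the single reduced point $(0, 0)$. A quasi-finite étale morphism with a single reduced fibre is an open immersion in a Zariski neighbourhood (by Zariski's main theorem), so after shrinking, $\sigma$ is an isomorphism onto an open subset of $\R^n$. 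In particular $\sigma^\ast : \PP(\R^n)_0 \to \PP(X \times \R^{n-m})_{(0, 0)}$ is an isomorphism, and composing with the surjection onto $(\PP(X)[t]/I(t))_{(0, 0)}$ yields (3).

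\textbf{Main obstacle.} The delicate step is the construction of $f$: one must isolate the point $0$ algebraically inside the possibly reducible set $\tilde Z$ and then use $f$ to ensure that the extra points of $\tilde Z$ do not contribute preimages of $0$ under the modified translation. The ideal $I$ does not intervene in the argument; the construction in fact yields the stronger statement that $\sigma^\ast$ is an isomorphism on full local rings, from which (3) for any non-zero $I \subset \mathfrak{m}_{x_0}$ is an immediate consequence.
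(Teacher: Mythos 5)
The paper does not actually prove this proposition---it is imported verbatim from \cite{rr=>urr}---so there is no internal proof to compare your argument against; I can only assess it on its own merits, and it has a fatal gap in the treatment of condition (3).

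The gap is the claim that, because $\sigma$ is \'etale at $(0,0)$ and $\sigma^{-1}(0)=\{(0,0)\}$ set-theoretically, $\sigma$ is an open immersion on a Zariski neighbourhood and hence $\sigma^\ast$ is an isomorphism of local rings. Controlling the fibre over the single point $0$ says nothing about nearby fibres, and the finite completion supplied by Zariski's main theorem acquires extra points over $0$ (coming from the locus $f=0$, where $t\to\infty$), so the ``single reduced fibre $\Rightarrow$ open immersion'' step is simply false for non-proper maps. Concretely, in your coordinates $\sigma(x,t)=(x',\,x''+f(x)t)$, so a generic point of $\R^n$ has as many preimages as the degree $d$ of the linear projection $X\to T_{x_0}X\cong\R^m$; whenever $d>1$ the map $\sigma$ is not birational, its image generates a proper subfield of the fraction field of $\PP(X)[t_1,\dots,t_{n-m}]$, and $\sigma^\ast$ cannot be surjective onto any localization of that ring. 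Take $X$ the unit circle with $x_0=(1,0)$: after your normalization $X=\{y^2+2y+x^2=0\}$, $\tilde Z=\{(0,0),(0,-2)\}$, $f=1+y/2$, and $\sigma((x,y),t)=(x,\,y+t(1+y/2))$ has degree $2$, since every nonempty Zariski-open subset of $X\times\R$ meets the branch $y\approx-2$, whose points also map onto a Euclidean neighbourhood of the origin. Thus the ``stronger statement'' at the end of your note---that $\sigma^\ast$ is an isomorphism on full local rings and that $I$ plays no role---is provably false except when $X$ is birational to a coordinate subspace via linear projection. The ideal $I$ is not a decoration: the entire content of the proposition is that surjectivity is attainable only onto the quotient by $I(t_1,\dots,t_{n-m})$, and attaining it requires building generators of $I$ into the correction terms of $\sigma$ so that, modulo $I\cdot(t_1,\dots,t_{n-m})$, the restrictions $x_i\vert_X$ and the classes of the $t_j$ become separately expressible. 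Your verification of (1), (2) and the isolation of $x_0$ in $\tilde Z$ are fine, but the argument for (3) cannot be repaired without a construction that genuinely uses $I$.
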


Furthermore, we need an analogue of \cite[Lemma 3.1]{rr=>urr}:
\begin{lem}\label{denom_reduction_reg}
Let $P,Q\in \PP(\R^n)$ two polynomials. Let $X$ be an irreducible non-singular variety, and let $\pi:X\rightarrow \R^n$ be a regular mapping, such that $Q\circ \pi$ is not identically equal to zero and $\frac{P}{Q}\circ \pi\in \RR^k(X)$. Let $x_0$ be a point of $X$ and let $\varphi:(X,x_0)\rightarrow \R^n$ be a germ of a regular mapping, such that for $1\leq i \leq n$ its $i$-th coordinated $\varphi_i$ satisfies
\begin{equation*}
    \varphi_i-\pi_i\in (Q\circ\pi)\mathfrak{m}_{x_0}\text{ in the ring }\PP(X)_{x_0}.
\end{equation*}
Then, there exists a Zariski neighbourhood $V$ of $x_0$ such that 
\begin{enumerate}
    \item $V\subset \dom (\varphi)$,
    \item for $x\in V$ we have $Q\circ \pi(x)=0\iff Q\circ \varphi(x)=0$,
    \item $\frac{P}{Q}\circ \varphi\in\RR^k(V)$.
\end{enumerate}
\begin{proof}
Note that for $v,w\in\R^n$, the polynomials $P$ and $Q$ can be written as
\begin{align*}
    P(v+w)=P(v)+\sum_i w_i P_i(v,w),\\
    Q(v+w)=Q(v)+\sum_i w_i Q_i(v,w),
\end{align*}
for some $P_i,Q_i\in \PP(\R^n)$. Take $V$ as the locus of points $x\in X$ satisfying the following conditions:
\begin{enumerate}
    \item $\varphi$ is defined at $x$,
    \item $\frac{\varphi_i-\pi_i}{Q\circ \pi }$ is regular at $x$ for all $i$,
    \item $1+\sum_i \frac{\varphi_i(x)-\pi_i(x)}{Q\circ\pi(x)} Q_i(\pi(x),\varphi(x)-\pi(x))\neq 0$.
\end{enumerate}
Then
\begin{equation*}
    Q\circ\varphi(x)=Q\circ\pi(x)\left(1+\sum_i \frac{\varphi_i(x)-\pi_i(x)}{Q\circ\pi(x)} Q_i\big(\pi(x),\varphi(x)-\pi(x)\big)\right)
\end{equation*}
and so the second point follows. Furthermore
\begin{equation*}
    \frac{P}{Q}\circ \varphi(x)=\frac{\frac{P}{Q}\circ \pi(x)+\sum_i \frac{\varphi_i(x)-\pi_i(x)}{Q\circ \pi(x)} P_i\big(\pi(x),\varphi(x)-\pi(x)\big)}{1+\sum_i \frac{\varphi_i(x)-\pi_i(x)}{Q\circ\pi(x)} Q_i\big(\pi(x),\varphi(x)-\pi(x)\big)}\in\RR^k(V).
\end{equation*}
\end{proof}
\end{lem}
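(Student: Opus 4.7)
The hypothesis, unwound via the definition of the localisation, gives regular germs $h_i \in \mathfrak{m}_{x_0}$ with
\begin{equation*}
    \varphi_i - \pi_i = (Q\circ\pi)\, h_i \quad\text{in }\PP(X)_{x_0}.
\end{equation*}
The plan is to extract a common factor of $Q\circ\pi$ from numerator and denominator of $(P/Q)\circ\varphi$ using this relation. First I would shrink to a Zariski open $V \subset \dom(\varphi)$ on which all $h_i$ are regular, so that the identity above holds pointwise and $\varphi - \pi$ is genuinely divisible by $Q\circ\pi$ as a regular function. The algebraic tool driving everything is the ``Taylor-style'' decomposition that for any $R \in \PP(\R^n)$ and any $v,w \in \R^n$ one can write $R(v+w) = R(v) + \sum_i w_i R_i(v,w)$ for suitable polynomials $R_i \in \PP(\R^n \times \R^n)$.

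Applying this to $Q$ with $v = \pi(x)$ and $w = \varphi(x)-\pi(x) = (Q\circ\pi)(x)\, h(x)$, the factor of $Q\circ\pi$ pulls out and one obtains
\begin{equation*}
    Q\circ\varphi(x) \;=\; (Q\circ\pi)(x)\Bigl(1 + \sum_i h_i(x)\, Q_i\bigl(\pi(x),\varphi(x)-\pi(x)\bigr)\Bigr).
\end{equation*}
At $x_0$ the parenthesised factor equals $1$ since $h_i(x_0)=0$, so it is nonvanishing on a Zariski open neighbourhood. Taking $V$ to be the intersection of $\dom(\varphi)$, the regular loci of the $h_i$, and this nonvanishing locus makes (1) immediate and gives (2) because $Q\circ\varphi$ and $Q\circ\pi$ differ by an invertible regular factor on $V$.

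For (3), I perform the same expansion on $P$ and divide, obtaining
\begin{equation*}
    \frac{P}{Q}\circ\varphi \;=\; \frac{\frac{P}{Q}\circ\pi + \sum_i h_i\, P_i(\pi,\varphi-\pi)}{1 + \sum_i h_i\, Q_i(\pi,\varphi-\pi)}.
\end{equation*}
The numerator is $\mathcal{C}^k$ on $V$ (since $\frac{P}{Q}\circ\pi \in \RR^k(X)$ by hypothesis and the other summands are regular on $V$), the denominator is regular and nonvanishing there, so the quotient is $\mathcal{C}^k$. The ``polynomial representation'' half of $k$-regulousness is automatic: $P\circ\varphi$ and $Q\circ\varphi$ are regular functions on $V$ and hence rational on $X$, so clearing a common polynomial denominator exhibits $(P/Q)\circ\varphi$ as a ratio of polynomials on $X$ with nonzero denominator. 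The main obstacle I anticipate is purely bookkeeping: one must justify that $(\varphi_i-\pi_i)/(Q\circ\pi)$ is an honest regular function on a sufficiently explicit Zariski open set rather than only a germ at $x_0$, and this is precisely what the hypothesis about the localisation $\PP(X)_{x_0}$ delivers.
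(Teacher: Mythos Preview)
Your proposal is correct and follows essentially the same approach as the paper: the same Taylor-style decomposition $R(v+w)=R(v)+\sum_i w_i R_i(v,w)$, the same choice of $V$ (domain of $\varphi$, regular locus of the $h_i=\tfrac{\varphi_i-\pi_i}{Q\circ\pi}$, and nonvanishing locus of the bracketed factor), and the same final quotient identity. Your write-up is in fact slightly more explicit than the paper's in two places---you spell out why $x_0\in V$ (since $h_i(x_0)=0$) and why the rational-representation half of $k$-regulousness holds---but these are refinements of presentation, not a different argument.
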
 

Recall that a set $U\subset \R^n$ is \emph{constructibly open}, if it is open in the Euclidean topology, and it can be written as a finite union of Zariski locally closed subsets of $\R^n$. With the machinery at hand, we can prove the following proposition, from which Theorem \ref{main_thm} follows easily:
\begin{prop}\label{main_prop}
Let $X\subset \R^n$ be a non-singular irreducible variety and let $f\in\RR^k(X)$. Fix a point $x_0\in X$. Then, there exists a constructibly open neighbourhood $U$ of $x_0$ in $\R^n$ and a regulous function $F\in\RR^k(U)$ such that $F\vert_{X\cap U}=f\vert_{X\cap U}$.

\begin{proof}
Set $m:=\dim (X)$. Write $f$ as $f=\frac{p}{q}$ with $p,q\in\PP(X)$ and $q\neq 0$, and choose any $P,Q\in\PP(\R^n)$ such that $p=P+\I(X),q=Q+\I(X)$. Without loss of generality $q(x_0)=0$, for otherwise we can take $F:=\frac{P}{Q}$. Applying Proposition \ref{alg_prop} to the ideal $I=(q)$, we find a polynomial mapping $\sigma:X\times\R^{n-m}\rightarrow \R^n$ such that
\begin{enumerate}
    \item $\sigma(x,0)=x$ for $x\in X$,
    \item the derivative of $\sigma$ at $(x_0,0)$ is a isomorphism,
    \item the induced homomorphism
    \begin{equation*}
        \PP(\R^n)_{x_0}\rightarrow (\PP(X\times \R^{n-m})/(q)(t_1,\dots,t_{n-m}))_{\mathfrak{m}_{(x_0,0)}}
    \end{equation*}
    is surjective.
\end{enumerate}

Set $Y:=\Z(q)\times\R^{n-m}\subset X\times \R^{n-m}$. The composed homomorphism
\begin{equation*}
    \PP(\R^n)_{x_0}\rightarrow\left(\PP(X\times \R^{n-m})/(q)(t_1,\dots,t_{n-m})\right)_{\mathfrak{m}_{(x_0,0)}}\rightarrow\PP(Y)_{(x_0,0)}
\end{equation*}
is then surjective as well. It follows that, if we define $Z:=\overline{\sigma(Y)}^{Zar}$, then $\sigma$ induces an isomorphism of germs of affine varieties
\begin{equation}\label{eq_local_iso}
    (Y,(x_0,0))\xlongrightarrow{\cong} (Z,x_0).
\end{equation}

For $1\leq i\leq n$ let $\psi_i\in\PP(\R^n)_{x_0}$ be a regular germ such that
\begin{equation*}
    \psi_i\circ\sigma-x_i\in (q)(t_1,\dots,t_{n-m}).
\end{equation*}
These together give a regular germ $\psi:(\R^n,x_0)\rightarrow \R^n$. Set $F:=\frac{P}{Q}\circ \psi$.

By Lemma \ref{denom_reduction_reg} applied to the mappings 
\begin{align*}
    \pi:X\times\R^{n-m}\rightarrow \R^n\\
    \pi(x,t):=x\in X\subset \R^n,\\
    \varphi:(X\times\R^{n-m},(x_0,0))\rightarrow \R^n\\
    \varphi(x,t):=\psi\circ\sigma(x,t)
\end{align*}
there exists a Zariski neighbourhood $V$ of $(x_0,0)$ in $X\times\R^{n-m}$ such that $F\circ \sigma \in\RR^k(V)$ and for $(x,t)\in V$ we have
\begin{equation}\label{eq_sets}
    Q\circ\psi\circ \sigma(x,t)=0\iff (x,t)\in Y.
\end{equation}
Decreasing $V$ if necessary we can assume that the derivative of $\sigma$ is non-singular on $V$. Furthermore, we can assume that $\sigma(V)\subset \dom (\psi)$. Lastly, thanks to (\ref{eq_local_iso}), we can assume that $\sigma(V\cap Y)$ is Zariski open in $Z$, so in particular it is constructible in $\R^n$.

Define
\begin{align*}
    W&:=\indet (\psi) \cup \{x\in \dom(\psi):Q\circ\psi(x)=0\},\\
    U&:=\R^n\backslash W\cup \sigma(V).
\end{align*}
As $\sigma$ is a local diffeomorphism between $V$ and $\sigma(V)$, it follows that $F\in\RR^k(\sigma(V))$. Also $F\in\RR(\R^n\backslash W)$, so $F\in\RR^k(U)$. Furthermore, for $x\in X$ sufficiently close to $x_0$ by assumption $\psi(x)=\psi\circ \sigma(x,0)=\pi(x,0)=x$, so $F\vert_X=f$ as a rational function. It hence suffices to check that $U$ is constructible in $\R^n$.

We first claim that 
\begin{equation*}
    \sigma(V)\cap W= \sigma(V\cap Y).
\end{equation*}
Indeed, let $(x,t)\in V$. Then, $\psi$ by assumption is regular at $\sigma(x,t)$, so thanks to (\ref{eq_sets}):
\begin{equation*}
    \sigma(x,t)\in W \iff Q\circ \psi \circ \sigma(x,t)=0\iff (x,t)\in Y.
\end{equation*}

Now we have that
\begin{equation*}
    U=\R^n\backslash W\cup \sigma(V)=\R^n\backslash W \cup (\sigma(V)\cap W)=\R^n\backslash W \cup \sigma(V\cap Y)
\end{equation*}
is a constructible set, and the conclusion follows.
\end{proof}
\end{prop}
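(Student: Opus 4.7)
The plan is to reduce to a local situation near $x_0$ and twist the naive extension $P/Q$ by a regular germ $\psi : (\R^n, x_0) \to \R^n$ that is constructed so that the denominator $Q \circ \psi$ vanishes only on a set matching $\mathcal{Z}(q) \subset X$ locally. First I would write $f = p/q$ with $p, q \in \PP(X)$, lift to $P, Q \in \PP(\R^n)$, and observe that if $q(x_0) \neq 0$ then $F := P/Q$ already works on a Zariski neighborhood of $x_0$. So the real case is $q(x_0) = 0$, and the obstruction to using $P/Q$ directly is that $Q$ may vanish on a set strictly larger than what $q$ cuts out inside $X$, making $P/Q$ fail to be $k$-regulous on any Euclidean neighborhood of $x_0$ in $\R^n$.

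To fix this, I would apply Proposition \ref{alg_prop} with the ideal $I = (q) \subset \PP(X)$, obtaining a polynomial map $\sigma : X \times \R^{n-m} \to \R^n$ which is the identity on $X \times \{0\}$, a local diffeomorphism at $(x_0, 0)$, and whose induced map of local rings, modulo the ideal $(q)(t_1, \dots, t_{n-m})$, is surjective. Using this surjectivity, for each coordinate $x_i \in \PP(\R^n)$ I would pick a regular germ $\psi_i \in \PP(\R^n)_{x_0}$ with $\psi_i \circ \sigma - x_i \in (q)(t_1, \dots, t_{n-m})$. Assembling these into $\psi : (\R^n, x_0) \to \R^n$, I would define $F := \tfrac{P}{Q} \circ \psi$ as a rational germ, and verify via the congruences above that $\varphi := \psi \circ \sigma$ satisfies $\varphi_i - \pi_i \in (q \circ \pi) \mathfrak{m}_{(x_0,0)}$, where $\pi(x,t) = x$.

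Next I would invoke Lemma \ref{denom_reduction_reg} on the pair $(\pi, \varphi)$, which yields a Zariski neighborhood $V$ of $(x_0, 0)$ in $X \times \R^{n-m}$ on which $F \circ \sigma \in \RR^k(V)$ and on which $Q \circ \psi \circ \sigma$ vanishes precisely on $Y := \Z(q) \times \R^{n-m}$. I would further shrink $V$ so that $\sigma$ is a local diffeomorphism there and so that $\sigma(V \cap Y)$ is Zariski open in $Z := \overline{\sigma(Y)}^{Zar}$; this last shrinking uses the fact that $\sigma$ induces an isomorphism of germs $(Y,(x_0,0)) \cong (Z, x_0)$, which follows from factoring the surjection of Proposition \ref{alg_prop} through $\PP(Y)_{(x_0,0)}$. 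This makes $\sigma(V \cap Y)$ a constructible subset of $\R^n$, which I expect to be the main technical point of the argument, since extendability as a rational function is easy but obtaining a constructibly open extension domain requires genuine geometric input.

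Finally I would set $W := \indet(\psi) \cup \{Q \circ \psi = 0\}$, and define $U := (\R^n \setminus W) \cup \sigma(V)$. On $\R^n \setminus W$, $F$ is a regular function, and on $\sigma(V)$ it is $k$-regulous by transport through the local diffeomorphism $\sigma|_V$, so $F \in \RR^k(U)$. The identity $\psi_i \circ \sigma(x, 0) = x_i$ (from setting $t = 0$ in the defining congruence) gives $\psi = \mathrm{id}$ on $X$ near $x_0$, hence $F|_X = f$ as required. For constructibility of $U$, the key observation is that $\sigma(V) \cap W = \sigma(V \cap Y)$, which follows from point (2) of Lemma \ref{denom_reduction_reg}; hence $U = (\R^n \setminus W) \cup \sigma(V \cap Y)$ is constructible, completing the proof.
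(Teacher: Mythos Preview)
Your proposal is correct and follows the paper's proof essentially step for step: the same reduction to $q(x_0)=0$, the same application of Proposition~\ref{alg_prop} with $I=(q)$, the same construction of $\psi$ and $F=\tfrac{P}{Q}\circ\psi$, the same use of Lemma~\ref{denom_reduction_reg} on $(\pi,\varphi)$, and the same definitions of $W$, $U$ together with the identity $\sigma(V)\cap W=\sigma(V\cap Y)$ for constructibility. The only point you leave implicit is that $\sigma(V)\subset\dom(\psi)$, but this already follows from condition~(1) of Lemma~\ref{denom_reduction_reg} since $\sigma$ is polynomial.
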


\begin{proof}[Proof of Theorem \ref{main_thm}]
Let $X$ and $f\in\RR^k(X)$ be as in the theorem. Consider the ideal 
\begin{equation*}
    I:=\{Q\in\RR^k(\R^n):\exists_{P\in\RR^k(\R^n)}P\vert_X=fQ\vert_X\}.
\end{equation*}
It suffices to prove that $1\in I$. 

First of all, clearly $I$ contains the ideal of functions vanishing on $X$, so $\Z(I)\subset X$. Fix a point $x_0\in X$ and let $X_0$ be the irreducible component of $X$ containing $x_0$. Applying Proposition \ref{main_prop}, we get that there exists a constructibly open neighbourhood $U$ of $x_0$ in $\R^n$ and a $k$-regulous function $F\in\RR^k(U)$ such that
\begin{equation*}
    F\vert_{U\cap X_0}=f\vert_{U\cap X_0}.
\end{equation*}
According to \cite[Th\'{e}or\`{e}me 6.4 and Corollaire 4.4]{francuska}, since $U$ is constructibly open, there exists $Q\in\RR^k(\R^n)$ with $\Z(Q)=\R^n\backslash U$. Then, thanks to \cite[Lemme 5.1]{francuska}, $F$ can be written as $F=\frac{P}{Q^N}$, where $P\in\RR^k(\R^n)$ and $N$ is a sufficiently large integer. Take $Q_1\in\RR(\R^n)$ to be any regular function vanishing precisely on $X\backslash X_0$. Then, as $U\cap X_0$ is Euclidean dense in $X_0$, we have
\begin{equation*}
    PQ_1\vert_X=fQ^NQ_1\vert_X,
\end{equation*}
so $Q^NQ_1\in I$. Hence $x_0\not \in \Z(I)$. As $x_0$ was an arbitrary point of $X$, we get $\Z(I)=\emptyset$. Thanks to the weak regulous Nullstellensatz (\cite[Proposition 5.23]{francuska}), we get $1\in I$. The conclusion follows.
\end{proof}
\bibliographystyle{plain}
\bibliography{references}
\end{document}